\newtheorem{theorem}{Theorem}[section]
\newtheorem{lemma}[theorem]{Lemma}
\newtheorem{proposition}{Proposition}[section]
\theoremstyle{definition}
\newtheorem{definition}[theorem]{Definition}
\theoremstyle{remark}
\newtheorem{remark}[theorem]{Remark}
\numberwithin{equation}{section}
\begin{document}

\title{A new proof of some matrix inequalities}

\author{T. Agama}
\address{Department of Mathematics, African Institute for Mathematical science, Ghana
}
\email{theophilus@aims.edu.gh/emperordagama@yahoo.com}


\subjclass[2000]{Primary 54C40, 14E20; Secondary 46E25, 20C20}

\date{\today}


\keywords{outer product; norm}

\begin{abstract}
In this paper we give alternate proofs of some well-known matrix inequalities. In particular, we show that under certain conditions the inequality holds \begin{align}\sum \limits_{\lambda_i\in \mathrm{Spec}(ab^{T})}\mathrm{min}\{\log |t-\lambda_i|\}_{[||a||,||b||]}&\leq \# \mathrm{Spec}(ab^T)\log\bigg(\frac{||b||+||a||}{2}\bigg)\nonumber \\&+\frac{1}{||b||-||a||}\sum \limits_{\lambda_i\in \mathrm{Spec}(ab^T)}\log \bigg(1-\frac{2\lambda_i}{||b||+||a||}\bigg).\nonumber
\end{align}Also under the same condition, the inequality also holds\begin{align}\int \limits_{||a||}^{||b||}\log|\mathrm{det}(ab^{T}-tI)|dt&\leq \# \mathrm{Spec}(ab^T)(||b||-||a||)\log\bigg(\frac{||b||+||a||}{2}\bigg)\nonumber \\&+\sum \limits_{\lambda_i\in \mathrm{Spec}(ab^T)}\log \bigg(1-\frac{2\lambda_i}{||b||+||a||}\bigg).\nonumber
\end{align}
\end{abstract}

\maketitle

\section{\textbf{Introduction and statement}}
There are useful inequalities in the literature that are quite hard to establish in a normal fashion, and it is often the case that a direct approach certainly yields a trivial upper or lower bound.  The situation will be somewhat easier and the result are often easily attainable in the best possible framework if we work within a certain space. A typical instance is the Cauchy-Schwartz inequality in the setting of Hilbert space. In light of this we introduce and examine the notion of an outer product and the associated space. We study this space and obtain the following inequalities as a consequence:
\begin{theorem}
Let $\langle V,||.||\rangle$ be a finite dimensional normed-vector space over $\mathbb{R}$  such that $||b||>||a||>1$ for $a,b\in \langle V,||.||\rangle$ and $\mathrm{Spec}(ab^T)\subset \mathbb{R}$ such that $\frac{||b||+||a||}{2}>\bigg|\mathrm{max}\bigg(\mathrm{Spec}(ab^T)\bigg)\bigg|$.  Then the inequality is valid \begin{align}\sum \limits_{\lambda_i\in \mathrm{Spec}(ab^{T})}\mathrm{min}\{\log |t-\lambda_i|\}_{[||a||,||b||]}&\leq \# \mathrm{Spec}(ab^T)\log\bigg(\frac{||b||+||a||}{2}\bigg)\nonumber \\&+\frac{1}{||b||-||a||}\sum \limits_{\lambda_i\in \mathrm{Spec}(ab^T)}\log \bigg(1-\frac{2\lambda_i}{||b||+||a||}\bigg).\nonumber
\end{align}
\end{theorem}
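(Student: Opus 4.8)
The plan is to reduce the statement to a one–variable estimate and then to exploit the concavity of the logarithm. Throughout write $m=\frac{||a||+||b||}{2}$ for the midpoint of the interval $[||a||,||b||]$ and $N=\#\mathrm{Spec}(ab^{T})$. I first record the factorisation $\log|\det(ab^{T}-tI)|=\sum_{\lambda_i\in\mathrm{Spec}(ab^{T})}\log|t-\lambda_i|$ together with the elementary identity $\log m+\log\bigl(1-\frac{2\lambda_i}{||a||+||b||}\bigr)=\log(m-\lambda_i)$. The latter is legitimate because the hypothesis $m>|\max(\mathrm{Spec}(ab^{T}))|$ forces $\lambda_i<m$, and hence $m-\lambda_i>0$, for every eigenvalue of $ab^{T}$.

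First I would replace the pointwise minimum by an interval average, using that a continuous function on a compact interval never exceeds its own mean value. Applied to $t\mapsto\log|t-\lambda_i|$ on $[||a||,||b||]$ this gives
\[
\mathrm{min}\{\log|t-\lambda_i|\}_{[||a||,||b||]}\leq\frac{1}{||b||-||a||}\int_{||a||}^{||b||}\log|t-\lambda_i|\,dt,
\]
the integral being finite since any logarithmic singularity occurring when $\lambda_i$ lies inside the interval is integrable. Summing over $\lambda_i\in\mathrm{Spec}(ab^{T})$ and interchanging the finite sum with the integral shows that the left–hand side of the theorem is dominated by $\frac{1}{||b||-||a||}\int_{||a||}^{||b||}\log|\det(ab^{T}-tI)|\,dt$.

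Next I would estimate this integral term by term by concavity. For each eigenvalue with $\lambda_i<||a||$ the integrand equals $\log(t-\lambda_i)$ with $t-\lambda_i>0$ throughout $[||a||,||b||]$, so, $s\mapsto\log s$ being concave on $(0,\infty)$, Jensen's inequality for the normalised Lebesgue measure on the interval gives
\[
\frac{1}{||b||-||a||}\int_{||a||}^{||b||}\log(t-\lambda_i)\,dt\leq\log\!\left(\frac{1}{||b||-||a||}\int_{||a||}^{||b||}(t-\lambda_i)\,dt\right)=\log(m-\lambda_i),
\]
since the mean of $t$ over the interval is exactly $m$. Combining this with the identity of the first paragraph and summing over the spectrum (the eigenvalues meeting $[||a||,||b||]$ requiring the separate treatment flagged below) yields
\[
\frac{1}{||b||-||a||}\int_{||a||}^{||b||}\log|\det(ab^{T}-tI)|\,dt\leq N\log m+\sum_{\lambda_i\in\mathrm{Spec}(ab^{T})}\log\Bigl(1-\frac{2\lambda_i}{||a||+||b||}\Bigr).
\]

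Chaining the last two displays bounds the left–hand side of the theorem by $N\log m+\sum_{\lambda_i}\log\bigl(1-\frac{2\lambda_i}{||a||+||b||}\bigr)$, that is, by the asserted right–hand side but with coefficient $1$ in place of $\frac{1}{||b||-||a||}$ on the correction sum. Reconciling these two coefficients is the step I expect to be the main obstacle: it amounts to comparing $\sum_{\lambda_i}\log\bigl(1-\frac{2\lambda_i}{||a||+||b||}\bigr)$ with its multiple by $\frac{1}{||b||-||a||}$, which hinges on the sign of this accumulated correction and on the size of $||b||-||a||$. The hypotheses $||b||>||a||>1$ and $m>|\max(\mathrm{Spec}(ab^{T}))|$ enter decisively at this point, since they localise the spectrum in the region $\lambda_i<m$ and keep every factor $1-\frac{2\lambda_i}{||a||+||b||}$ positive, so that each logarithm is real and the eigenvalues meeting the interval can be isolated. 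Carrying out the resulting sign bookkeeping, uniformly over $\mathrm{Spec}(ab^{T})$, so that the rescaling moves the estimate in the admissible direction, is the delicate heart of the argument; once it is in place the stated inequality follows immediately.
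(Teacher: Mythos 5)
Your proposal follows essentially the same route as the paper: bound each minimum by the interval mean, then apply Jensen's inequality to the concave logarithm to get $\frac{1}{||b||-||a||}\int_{||a||}^{||b||}\log|t-\lambda_i|\,dt\leq\log\bigl(m-\lambda_i\bigr)$ with $m=\frac{||a||+||b||}{2}$, and split $\log(m-\lambda_i)=\log m+\log\bigl(1-\frac{2\lambda_i}{||a||+||b||}\bigr)$. Up to that point the computation is sound. But the proof is not complete: you arrive at the bound $N\log m+S$ with $N=\#\mathrm{Spec}(ab^T)$ and $S=\sum_i\log\bigl(1-\frac{2\lambda_i}{||a||+||b||}\bigr)$, and you explicitly defer the passage from coefficient $1$ to coefficient $\frac{1}{||b||-||a||}$ on $S$ to unspecified ``sign bookkeeping.'' That step is the entire difficulty, and it cannot be carried out uniformly: the sign of $S$ is genuinely indeterminate (for the rank-one matrix $ab^{T}$ it is positive when the nonzero eigenvalue $b^{T}a$ is negative, negative when it is positive, zero when it vanishes), and $||b||-||a||$ can lie on either side of $1$, so $\frac{S}{||b||-||a||}\geq S$ holds in some configurations and fails in others. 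As written, your argument establishes a different inequality (coefficient $1$ on $S$), not the stated one.

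Your honest bookkeeping in fact exposes the same defect in the paper. The paper deduces the theorem from Proposition \ref{key}, whose proof is exactly your Jensen step; performed correctly that step yields $(a;b)\leq(||b||-||a||)\bigl[N\log m+S\bigr]$, whereas the Proposition asserts $(a;b)\leq N(||b||-||a||)\log m+S$, i.e.\ the factor $||b||-||a||$ is silently dropped from the second term, and dividing by $||b||-||a||$ is what manufactures the theorem's coefficient $\frac{1}{||b||-||a||}$. A second point you flag but never resolve, and which the paper also ignores, is an eigenvalue $\lambda_i\in[||a||,||b||]$ (permitted by the hypothesis $m>|\mathrm{max}(\mathrm{Spec}(ab^T))|$): there $\frac{1}{||b||-||a||}\int_{||a||}^{||b||}|t-\lambda_i|\,dt=\frac{(\lambda_i-||a||)^2+(||b||-\lambda_i)^2}{2(||b||-||a||)}\geq m-\lambda_i$, so replacing the mean of $|t-\lambda_i|$ by $m-\lambda_i$ moves in the wrong direction and the term-by-term bound $\log(m-\lambda_i)$ is false in general (take $\lambda_i$ slightly below $m$). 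Both gaps must be closed --- or the statement weakened to the coefficient-$1$ version with the spectrum kept off the interval --- before the proof stands.
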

\bigskip

\begin{theorem}
Let $\langle V,||.||\rangle $ be a finite dimensional normed-vector space over $\mathbb{R}$ such that $||b||>||a||>1$ for $a,b\in \langle V,||.||\rangle$ and $\mathrm{Spec}(ab^T)\subset \mathbb{R}$ with  $\frac{||b||+||a||}{2}>\bigg|\mathrm{max}\bigg(\mathrm{Spec}(ab^T)\bigg)\bigg|$. Then the inequality is valid\begin{align}\int \limits_{||a||}^{||b||}\log|\mathrm{det}(ab^{T}-tI)|dt&\leq \# \mathrm{Spec}(ab^T)(||b||-||a||)\log\bigg(\frac{||b||+||a||}{2}\bigg)\nonumber \\&+\sum \limits_{\lambda_i\in \mathrm{Spec}(ab^T)}\log \bigg(1-\frac{2\lambda_i}{||b||+||a||}\bigg).\nonumber
\end{align}
\end{theorem}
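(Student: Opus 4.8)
The plan is to reduce the determinant to its spectrum and then integrate eigenvalue by eigenvalue. Writing out the characteristic polynomial of the (rank one) outer product gives $\det(ab^{T}-tI)=\prod_{\lambda_i\in\mathrm{Spec}(ab^T)}(\lambda_i-t)$, so that $\log|\det(ab^{T}-tI)|=\sum_{\lambda_i\in\mathrm{Spec}(ab^T)}\log|t-\lambda_i|$ wherever the integrand is finite. Consequently
\begin{align}
\int\limits_{||a||}^{||b||}\log|\det(ab^{T}-tI)|\,dt=\sum\limits_{\lambda_i\in\mathrm{Spec}(ab^T)}\int\limits_{||a||}^{||b||}\log|t-\lambda_i|\,dt,\nonumber
\end{align}
and it suffices to bound each summand by $(||b||-||a||)\log\big(\tfrac{||a||+||b||}{2}\big)+\log\big(1-\tfrac{2\lambda_i}{||a||+||b||}\big)$ and add.

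My first instinct is to try to deduce this from Theorem 1 by integrating that inequality over $t\in[||a||,||b||]$; since the right-hand side of Theorem 1 is constant in $t$, its integral over the interval is exactly $(||b||-||a||)$ times that right-hand side, which is precisely the right-hand side claimed here. The catch is that the left-hand side of Theorem 1 carries a minimum over the interval, so integrating it only produces the lower bound $(||b||-||a||)\cdot\min$, not the integral of $\sum_i\log|t-\lambda_i|$ itself; indeed the average of $\log|t-\lambda_i|$ strictly exceeds its minimum. I would therefore abandon the shortcut and argue directly.

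The key analytic input is the concavity of $\log$. Under the hypothesis $\tfrac{||a||+||b||}{2}>|\max(\mathrm{Spec}(ab^T))|$ each $\lambda_i$ lies below the midpoint; in the principal case where in fact $\lambda_i<||a||$, the map $t\mapsto\log|t-\lambda_i|=\log(t-\lambda_i)$ is concave on $[||a||,||b||]$, and the Hermite--Hadamard (midpoint) inequality gives
\begin{align}
\frac{1}{||b||-||a||}\int\limits_{||a||}^{||b||}\log|t-\lambda_i|\,dt\le\log\Big(\tfrac{||a||+||b||}{2}-\lambda_i\Big)=\log\Big(\tfrac{||a||+||b||}{2}\Big)+\log\Big(1-\tfrac{2\lambda_i}{||a||+||b||}\Big).\nonumber
\end{align}
Multiplying through by $||b||-||a||$ and summing over the spectrum recovers the shape of the claimed bound, with the leading term $\#\mathrm{Spec}(ab^T)(||b||-||a||)\log\big(\tfrac{||a||+||b||}{2}\big)$ appearing exactly.

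The main obstacle is the correction term. The Hermite--Hadamard step attaches a factor $(||b||-||a||)$ to each $\log\big(1-\tfrac{2\lambda_i}{||a||+||b||}\big)$, whereas the theorem carries coefficient one. Closing this gap reduces to the elementary inequality $(||b||-||a||-1)\sum_i\log\big(1-\tfrac{2\lambda_i}{||a||+||b||}\big)\le 0$, and this is exactly where the standing hypotheses must be spent: the condition $||b||>||a||>1$ controls the sign of $||b||-||a||-1$, while $\tfrac{||a||+||b||}{2}>|\max(\mathrm{Spec}(ab^T))|$ fixes the sign of each $1-\tfrac{2\lambda_i}{||a||+||b||}$ as positive and pins down whether its logarithm is positive or negative according to the sign of $\lambda_i$. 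I expect the real work to be a careful case analysis on the location and sign of the eigenvalues, together with the subsidiary integrable-singularity case in which some $\lambda_i$ falls inside $[||a||,||b||]$ (where the concavity argument must be applied on a subinterval or recovered by a limiting argument), rather than in the analytic core, which is the single application of concavity above.
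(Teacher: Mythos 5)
Your reduction is the paper's own route: the identity $\log|\mathrm{det}(ab^{T}-tI)|=\sum_{\lambda_i}\log|t-\lambda_i|$ identifies the left-hand side with the outer product $(a;b)$ of Definition \ref{outer}, and the paper then simply cites Proposition \ref{key}. Your Hermite--Hadamard step is the same analytic core as the Jensen step in the proof of Proposition \ref{key} (for $f(t)=|t-\lambda_i|$, which is linear on the interval when $\lambda_i\le ||a||$, the two inequalities coincide), so you are in effect re-deriving that proposition rather than finding a different argument.

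There is, however, a genuine gap, and it is exactly the one you flag but do not close. Concavity gives $\int_{||a||}^{||b||}\log|t-\lambda_i|\,dt\le(||b||-||a||)\log\big(\frac{||a||+||b||}{2}-\lambda_i\big)$, so after summing the correction term carries the factor $(||b||-||a||)$, while the theorem asserts coefficient $1$. The ``elementary inequality'' $(||b||-||a||-1)\sum_i\log\big(1-\frac{2\lambda_i}{||a||+||b||}\big)\le0$ to which you reduce the discrepancy is not implied by the hypotheses and is false in general: if $||b||-||a||>1$ and every $\lambda_i\le0$ with at least one $\lambda_i<0$, each logarithm is nonnegative and the sum is strictly positive; if instead $||b||-||a||<1$ and every $\lambda_i>0$, each logarithm is negative and the product is again positive. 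The standing hypotheses $||b||>||a||>1$ and $\frac{||a||+||b||}{2}>|\mathrm{max}(\mathrm{Spec}(ab^T))|$ constrain neither the sign of $||b||-||a||-1$ nor the sign of the sum, so no case analysis on the eigenvalues rescues this step; your argument proves only the weaker bound in which $(||b||-||a||)$ multiplies the second sum. You should know that the paper's own proof of Proposition \ref{key} ends with ``the result follows immediately'' at precisely the point where this coefficient discrepancy arises, so you have located, not introduced, the defect. A further flaw shared with the paper: $\mathrm{det}(ab^{T}-tI)=\prod(\lambda_i-t)$ holds only with eigenvalues counted with multiplicity, and $ab^{T}$ has rank one, so $0$ is an eigenvalue of multiplicity $n-1$; for $\dim V\ge3$ the sum over the \emph{set} $\mathrm{Spec}(ab^{T})$ therefore does not equal $\log|\mathrm{det}(ab^{T}-tI)|$, and the very first displayed identity of your proof fails as written.
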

\bigskip

\section{\textbf{The outer product}}

\begin{definition}\label{outer}
Let $\bigg\langle V,||.||\bigg \rangle$ be a finite dimensional normed-vector space over $\mathbb{R}$. Then by an outer product on $\langle V,||.||\rangle$, we mean the bivariate map $(;):\langle V,||.||\rangle \times \langle V,||.||\rangle \longrightarrow \mathbb{R}$ such that for any $a,b\in \langle V,||.||\rangle$, then \begin{align}(a;b)=\sum \limits_{\lambda_i\in \mathrm{Spec}(ab^{T})}\int \limits_{||a||}^{||b||}\log |t-\lambda_i|dt,\nonumber
\end{align}where $\mathrm{Spec}(ab^{T})$ denotes the spectrum of the matrix $ab^{T}$. A norm-vector space equipped with an outer product is called an outer product space. In particular, we denote this space by the triple $\bigg \langle V,||.||,(;)\bigg\rangle$.
\end{definition}

\begin{remark}
Technically speaking, the notion of taking the outer product on any two vectors is basically the process of averaging the area of the $\log$ function on each disc with center the elements of the spectrum of the matrix induced by the vectors. One could also think of the outer product space as a much bigger space compared to a norm-vector space; that is, an outer product space is a vector space equipped with the norm and an outer product.
\end{remark}

\section{\textbf{Properties of the outer product}}
The outer product is usually fairly intractable. In this section we examine some elementary properties. 
\begin{proposition}
The following remain valid
\begin{enumerate}
\item [(i)] $(a;b)+(b;a)=0$.
\bigskip

\item [(ii)] $(a;a)=0$.
\bigskip

\item [(iii)] $(b;b)=(a;b)+(b;a)=(a;a)$.
\end{enumerate}
\end{proposition}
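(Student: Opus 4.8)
The plan is to reduce all three identities to two elementary facts about the outer product of Definition \ref{outer}: first, that the spectra of $ab^{T}$ and $ba^{T}$ coincide, and second, that reversing the limits of integration flips the sign of the integral. Granting these, part (ii) falls out immediately and parts (i) and (iii) follow with essentially no further work.

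I would begin with part (ii), which is the easiest. Here each summand is the integral $\int_{||a||}^{||a||}\log|t-\lambda_i|\,dt$ over a degenerate interval, which vanishes; hence $(a;a)=0$ regardless of the spectrum of $aa^{T}$.

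For part (i) the key observation is that for column vectors $a,b$ the matrices $ab^{T}$ and $ba^{T}$ are both rank one, and the standard fact that $AB$ and $BA$ share the same nonzero eigenvalues with multiplicity yields $\mathrm{Spec}(ab^{T})=\mathrm{Spec}(ba^{T})$; concretely, both carry the single nonzero eigenvalue $\langle a,b\rangle=a^{T}b=b^{T}a$ together with $0$ of multiplicity $\dim V-1$. With the summation index set thus identical in $(a;b)$ and $(b;a)$, I would write
\begin{align*}
(b;a)&=\sum_{\lambda_i\in\mathrm{Spec}(ab^{T})}\int_{||b||}^{||a||}\log|t-\lambda_i|\,dt\\
&=-\sum_{\lambda_i\in\mathrm{Spec}(ab^{T})}\int_{||a||}^{||b||}\log|t-\lambda_i|\,dt=-(a;b),
\end{align*}
which is precisely $(a;b)+(b;a)=0$.

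Finally, part (iii) requires no new computation: applying part (ii) to $a$ and to $b$ gives $(a;a)=0=(b;b)$, while part (i) gives $(a;b)+(b;a)=0$, so all three expressions equal zero and therefore coincide. The only step that is not purely formal is the spectrum identity $\mathrm{Spec}(ab^{T})=\mathrm{Spec}(ba^{T})$, and I expect this to be the main point to justify carefully, since everything else is bookkeeping with the sign of the integral.
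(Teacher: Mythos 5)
Your proposal is correct and follows essentially the same route as the paper: part (i) is proved by the identical manipulation, namely invoking $\mathrm{Spec}(ab^{T})=\mathrm{Spec}(ba^{T})$ and flipping the limits of integration, while the paper simply declares (ii) and (iii) obvious and you supply the natural justifications (a degenerate interval of integration for (ii), and the combination of (i) and (ii) for (iii)). Your added remark that the shared spectrum consists of $a^{T}b$ together with $0$ of multiplicity $\dim V-1$ is a welcome bit of extra rigor but does not change the argument.
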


\begin{proof}
\item [(i)] By Definition, It follows that \begin{align}(a;b)&=\sum \limits_{\lambda_i\in \mathrm{Spec}(ab^{T})}\int \limits_{||a||}^{||b||}\log |t-\lambda_i|dt,\nonumber \\&=-\sum \limits_{\lambda_i\in \mathrm{Spec}(ba^{T})}\int \limits_{||b||}^{||a||}\log |t-\lambda_i|dt\nonumber \\&=-(b;a)\nonumber
\end{align}since $\mathrm{Spec}(ab^T)=\mathrm{Spec}(ba^T)$, and the result follows immediately.
\bigskip

The result is obvious for $(ii)$ and $(iii)$.
\end{proof}

\bigskip

\section{\textbf{Applications of the outer product}}
In this section we explore an application of the notion of an outer product. We leverage this to obtain a non-trivial upper bound for the average weighted minimum distance of vectors close to the spectrum induced by these vectors. We find the following inequality useful:

\begin{lemma}[Jensen]\label{Jensen}
Let $\mathcal{M}$ be a sigma-algebra of the set $\Omega$ equipped with the positive measure $\mu$ such that $\mu(\Omega)=1$. Let $f$ be a real-valued function in $L^1(\mu)$ with $a<f(x)<b$ for all $x\in \Omega$. If $\varphi$ is concave on $(a,b)$, then we have \begin{align}\int \limits_{\Omega}(\varphi \circ f)d\mu \leq \varphi \bigg(\int \limits_{\Omega}fd\mu \bigg).\nonumber
\end{align}
\end{lemma}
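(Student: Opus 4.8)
The plan is to exploit the defining geometric feature of a concave function: at every interior point of its domain it admits a \emph{supporting line} that lies above the graph. Setting $t_0 := \int_\Omega f\, d\mu$, I would first record that since $\mu(\Omega)=1$ and $a<f(x)<b$ holds pointwise, the barycentre $t_0$ is itself an interior point of $(a,b)$. Hence $\varphi(t_0)$ is defined and the supporting-line machinery is available precisely at the point where it is needed.

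The central step is to produce a real number $\beta$ for which
\begin{align}
\varphi(s)\leq \varphi(t_0)+\beta(s-t_0)\nonumber
\end{align}
holds for every $s\in(a,b)$. For a concave $\varphi$ such a slope $\beta$ may be taken to be any value lying between the right-hand and left-hand derivatives of $\varphi$ at $t_0$; concavity guarantees that these one-sided derivatives exist and that the displayed linear majorant is valid throughout $(a,b)$. This is the heart of the argument, and it is the only place where the hypothesis that $\varphi$ is concave enters in an essential way.

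With the supporting line in hand, I would substitute $s=f(x)$ and integrate against $\mu$. Linearity of the integral together with $\int_\Omega d\mu=\mu(\Omega)=1$ collapses the right-hand side:
\begin{align}
\int_\Omega (\varphi\circ f)\, d\mu \leq \varphi(t_0)\,\mu(\Omega)+\beta\left(\int_\Omega f\, d\mu - t_0\right)=\varphi(t_0),\nonumber
\end{align}
because the bracketed term vanishes by the very choice of $t_0$. Recalling that $t_0=\int_\Omega f\, d\mu$ then yields the claimed inequality in the stated direction.

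The main obstacle I anticipate is not the computation but the justification of the supporting line, together with a few measure-theoretic hygiene points. One must check that $\varphi\circ f$ is $\mu$-measurable, which is immediate since concavity forces $\varphi$ to be continuous on the open interval $(a,b)$ while $f$ is already measurable, and that the integral on the left is meaningful. The supporting-line bound conveniently handles the latter, since it controls $\varphi\circ f$ from above by a $\mu$-integrable affine function, so the integral is well defined and the inequality is not vacuous.
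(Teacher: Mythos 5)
Your proof is correct: the paper does not actually prove this lemma but defers to the cited reference (Rudin), and your supporting-line argument is precisely the standard proof given there, mirrored from the convex to the concave case. The interior-point check for $t_0=\int_\Omega f\,d\mu$, the choice of a slope $\beta$ between the one-sided derivatives, and the observation that the affine majorant keeps $\int_\Omega(\varphi\circ f)\,d\mu$ well defined are all handled appropriately.
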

\bigskip

The above Lemma is standard and can be found in several analysis texts (See \cite{rudin2006real}). It has several analogues but this version is appropriate for our needs. The only compromise we make is that we do not use the exact form but a variant. Nevertheless parallels could be drawn after a certain normalization. 

\begin{remark}
The applications we will obtain in the sequel will rely mostly on the assumption that the spectrum  $\mathrm{Spec}(ab^{T}) \subset \mathbb{R}$. This gives the leeway and the freedom to exploit the ordering property of the set of real numbers $\mathbb{R}$, which the complex space $\mathbb{C}$ lacks.
\end{remark}

\begin{proposition}\label{key}
Let $\langle V,||.||\rangle$ be a finite dimensional normed-vector space  over $\mathbb{R}$ and  $a,b\in \bigg \langle V, ||.||,(;)\bigg \rangle$ such that $||b||>||a||>1$ for $a,b\in \langle V,||.||\rangle$ and $\mathrm{Spec}(ab^T)\subset \mathbb{R}$ with $\frac{||b||+||a||}{2}>\bigg|\mathrm{max}\bigg(\mathrm{Spec}(ab^T)\bigg)\bigg|$. Then the inequality remains valid\begin{align}(a;b)&\leq \# \mathrm{Spec}(ab^T)(||b||-||a||)\log\bigg(\frac{||b||+||a||}{2}\bigg)\nonumber \\&+\sum \limits_{\lambda_i\in \mathrm{Spec}(ab^T)}\log \bigg(1-\frac{2\lambda_i}{||b||+||a||}\bigg).\nonumber
\end{align}
\end{proposition}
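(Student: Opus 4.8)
The plan is to exploit the additive structure of the outer product together with the concavity of the logarithm. By Definition \ref{outer}, the quantity $(a;b)$ is a finite sum over the spectrum of integrals of the form $\int_{||a||}^{||b||}\log|t-\lambda_i|\,dt$, so it suffices to bound each summand separately and then add the resulting estimates. Thus I would fix a single eigenvalue $\lambda_i\in\mathrm{Spec}(ab^T)$ and concentrate on the one-dimensional integral $\int_{||a||}^{||b||}\log|t-\lambda_i|\,dt$.

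To bring Lemma \ref{Jensen} to bear, I would equip the interval $[||a||,||b||]$ with the normalized measure $d\mu=\frac{dt}{||b||-||a||}$, which is a probability measure since $||b||>||a||$. Writing $f(t)=|t-\lambda_i|$ and $\varphi=\log$, which is concave on $(0,\infty)$, Jensen's inequality gives
\begin{align}
\frac{1}{||b||-||a||}\int_{||a||}^{||b||}\log|t-\lambda_i|\,dt\leq\log\left(\frac{1}{||b||-||a||}\int_{||a||}^{||b||}|t-\lambda_i|\,dt\right).\nonumber
\end{align}
The hypothesis $\frac{||a||+||b||}{2}>|\max(\mathrm{Spec}(ab^T))|$ guarantees that $\lambda_i<\frac{||a||+||b||}{2}$, which I will need to keep the argument of the outer logarithm (and of the final term in the statement) strictly positive. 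Under the expectation that $t-\lambda_i$ does not change sign on the interval, the inner average evaluates to $\frac{||a||+||b||}{2}-\lambda_i$, so that $\int_{||a||}^{||b||}\log|t-\lambda_i|\,dt\leq(||b||-||a||)\log\left(\frac{||a||+||b||}{2}-\lambda_i\right)$.

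The final step is a factorization: writing $\frac{||a||+||b||}{2}-\lambda_i=\frac{||a||+||b||}{2}\left(1-\frac{2\lambda_i}{||a||+||b||}\right)$ and splitting the logarithm of the product turns the per-eigenvalue bound into $(||b||-||a||)\log\frac{||a||+||b||}{2}+(||b||-||a||)\log\left(1-\frac{2\lambda_i}{||a||+||b||}\right)$. Summing over the $\#\mathrm{Spec}(ab^T)$ eigenvalues produces the counting term $\#\mathrm{Spec}(ab^T)(||b||-||a||)\log\frac{||a||+||b||}{2}$ together with a residual sum of the logarithmic corrections, which is the shape of the claimed inequality.

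I expect the main obstacle to be reconciling the normalization. Carrying the probability measure through Jensen cleanly leaves a spurious factor of $(||b||-||a||)$ attached to the correction sum $\sum_i\log\left(1-\frac{2\lambda_i}{||a||+||b||}\right)$, whereas the statement carries no such factor; matching the inequality exactly therefore seems to require the non-normalized variant of Jensen alluded to after Lemma \ref{Jensen}, and pinning down precisely which normalization is intended is the delicate point. A secondary difficulty is the absolute value: if some $\lambda_i$ lies in the open interval $(||a||,||b||)$ then $t-\lambda_i$ changes sign and the clean evaluation of $\int|t-\lambda_i|\,dt$ must be replaced by a case analysis, so I would either invoke the spectral hypothesis to force $\lambda_i\le||a||$ or bound the average of $|t-\lambda_i|$ from above directly.
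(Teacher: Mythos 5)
Your route is essentially the paper's own: decompose $(a;b)$ eigenvalue by eigenvalue via Definition \ref{outer}, apply Lemma \ref{Jensen} to each integral with the probability measure $dt/(||b||-||a||)$ on $[||a||,||b||]$, evaluate the inner average as $\frac{||a||+||b||}{2}-\lambda_i$, and factor out $\frac{||a||+||b||}{2}$. The two obstacles you single out at the end are not artifacts of your write-up; they are genuine gaps, and the paper's proof (which stops right after the Jensen step and asserts that ``the result follows immediately'') does not address either of them. First, carrying the computation through honestly gives
\begin{align}
(a;b)\leq \# \mathrm{Spec}(ab^T)\,(||b||-||a||)\log\bigg(\frac{||b||+||a||}{2}\bigg)+(||b||-||a||)\sum_{\lambda_i\in\mathrm{Spec}(ab^T)}\log\bigg(1-\frac{2\lambda_i}{||b||+||a||}\bigg),\nonumber
\end{align}
with the factor $||b||-||a||$ attached to the correction sum. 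This is a genuinely different statement from the proposition: each term $\log\big(1-\frac{2\lambda_i}{||b||+||a||}\big)$ is positive when $\lambda_i<0$ and negative when $0<\lambda_i<\frac{||a||+||b||}{2}$, and $||b||-||a||$ may be larger or smaller than $1$, so neither version implies the other. No choice of normalization in Jensen removes this factor; the discrepancy lies in the statement of the proposition, not in your bookkeeping.

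Second, the hypothesis $\frac{||a||+||b||}{2}>\big|\mathrm{max}\big(\mathrm{Spec}(ab^T)\big)\big|$ forces $\lambda_i<\frac{||a||+||b||}{2}$ but does not exclude $\lambda_i\in(||a||,||b||)$. In that case $t-\lambda_i$ changes sign on the interval, and writing $u=\lambda_i-||a||>0$ and $v=||b||-\lambda_i>0$ one finds $\frac{1}{||b||-||a||}\int_{||a||}^{||b||}|t-\lambda_i|\,dt=\frac{u^2+v^2}{2(u+v)}>\frac{v-u}{2}=\frac{||a||+||b||}{2}-\lambda_i$, so the clean evaluation \emph{under}estimates the quantity you need to bound from above and the chain of inequalities breaks at that point. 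Your proposed fallback of ``invoking the spectral hypothesis to force $\lambda_i\leq||a||$'' is not available from the stated assumptions; one would have to add a hypothesis such as $\mathrm{max}\big(\mathrm{Spec}(ab^T)\big)\leq||a||$ to make this step rigorous, after which the argument proves the displayed inequality above rather than the proposition as stated.
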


\begin{proof}
By applying definition \ref{outer} and  applying Lemma \ref{Jensen}, we obtain \begin{align}(a;b)&=\sum \limits_{\lambda_i\in \mathrm{Spec}(ab^{T})}\int \limits_{||a||}^{||b||}\log |t-\lambda_i|dt\nonumber \\&\leq (||b||-||a||)\sum \limits_{\lambda_i\in \mathrm{Spec}(ab^T)}\log \bigg(\int \limits_{||a||}^{||b||}\frac{|t-\lambda_i|}{||b||-||a||}dt\bigg)\nonumber \\&=(||b||-||a||)\sum \limits_{\lambda_i\in \mathrm{Spec}(ab^T)}\log \bigg(\int \limits_{||a||}^{||b||}|t-\lambda_i|dt\bigg)-\# \mathrm{Spec}(ab^T)(||b||-||a||)\log(||b||-||a||)\nonumber 
\end{align}The result follows immediately.
\end{proof}

\begin{remark}
The above inequality relates the outer product to an important matrix statistic. Indeed knowing these enable us to get at least some control on the outer product. Next we examine some immediate consequences of this inequality. It gives the average minimum distance between the element in the spectrum of the matrix induced by the vectors and the vectors.
\end{remark}
\bigskip

\begin{theorem}
Let $\langle V,||.||\rangle$ be a finite dimensional normed-vector space over $\mathbb{R}$ such that $||b||>||a||>1$ for $a,b\in \langle V,||.||\rangle$ and $\mathrm{Spec}(ab^T)\subset \mathbb{R}$ such that $\frac{||b||+||a||}{2}>\bigg|\mathrm{max}\bigg(\mathrm{Spec}(ab^T)\bigg)\bigg|$.  Then the inequality is valid \begin{align}\sum \limits_{\lambda_i\in \mathrm{Spec}(ab^{T})}\mathrm{min}\{\log |t-\lambda_i|\}_{[||a||,||b||]}&\leq \# \mathrm{Spec}(ab^T)\log\bigg(\frac{||b||+||a||}{2}\bigg)\nonumber \\&+\frac{1}{||b||-||a||}\sum \limits_{\lambda_i\in \mathrm{Spec}(ab^T)}\log \bigg(1-\frac{2\lambda_i}{||b||+||a||}\bigg).\nonumber
\end{align}
\end{theorem}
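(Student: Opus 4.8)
The plan is to deduce this statement directly from Proposition \ref{key}, using only the elementary fact that the minimum of a function over an interval never exceeds its average over that interval. First I would fix an eigenvalue $\lambda_i\in\mathrm{Spec}(ab^T)$ and view $\log|t-\lambda_i|$ as a function of $t$ on $[||a||,||b||]$. Since $\frac{dt}{||b||-||a||}$ is a probability measure on that interval and the integral of a function against a probability measure is bounded below by its infimum, I obtain
\begin{align}
\mathrm{min}\{\log |t-\lambda_i|\}_{[||a||,||b||]}\leq \frac{1}{||b||-||a||}\int \limits_{||a||}^{||b||}\log |t-\lambda_i|\,dt.\nonumber
\end{align}
Here the minimum over the closed interval is read as an infimum; if $\lambda_i$ happens to lie in $[||a||,||b||]$ this infimum is $-\infty$ and the inequality holds trivially, while the integral on the right stays finite because the logarithmic singularity is integrable.

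Next I would sum this bound over all $\lambda_i\in\mathrm{Spec}(ab^T)$. On the right the summed integrals are, by Definition \ref{outer}, exactly the outer product $(a;b)$, scaled by $\frac{1}{||b||-||a||}$, so that
\begin{align}
\sum \limits_{\lambda_i\in \mathrm{Spec}(ab^{T})}\mathrm{min}\{\log |t-\lambda_i|\}_{[||a||,||b||]}\leq \frac{1}{||b||-||a||}(a;b).\nonumber
\end{align}
I would then invoke Proposition \ref{key} to replace $(a;b)$ by its upper bound and divide that bound by the positive quantity $||b||-||a||$. Division cancels the factor $(||b||-||a||)$ in the first term, turning $\#\mathrm{Spec}(ab^T)(||b||-||a||)\log\big(\frac{||b||+||a||}{2}\big)$ into $\#\mathrm{Spec}(ab^T)\log\big(\frac{||b||+||a||}{2}\big)$, and scales the remaining logarithmic sum by $\frac{1}{||b||-||a||}$, which reproduces precisely the asserted right-hand side. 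The hypotheses $||b||>||a||>1$ and $\frac{||b||+||a||}{2}>\big|\mathrm{max}(\mathrm{Spec}(ab^T))\big|$ enter only to keep each argument $1-\frac{2\lambda_i}{||b||+||a||}$ positive (so the logarithms are real) and to ensure $||b||-||a||>0$, so that dividing by it preserves the direction of the inequality.

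I do not anticipate a genuine obstacle: all the analytic weight of the result sits in Proposition \ref{key}, and the present theorem is a routine passage from an averaged (integral) estimate to a pointwise-minimum estimate. The single point deserving care is the meaning of the minimum when an eigenvalue falls inside $[||a||,||b||]$, which I would handle once and for all with the infimum convention described above; everything else is bookkeeping of the constant $||b||-||a||$.
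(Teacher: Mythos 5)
Your proposal is correct and follows essentially the same route as the paper: both bound each $\int_{||a||}^{||b||}\log|t-\lambda_i|\,dt$ from below by $(||b||-||a||)$ times the minimum of the integrand, sum over the spectrum to relate the left-hand side to $(a;b)$, and then divide the upper bound of Proposition \ref{key} by $||b||-||a||$. Your explicit treatment of the case where an eigenvalue lies in $[||a||,||b||]$ (reading the minimum as an infimum, possibly $-\infty$) is a point of care the paper omits, but it does not change the argument.
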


\begin{proof}
By the properties of the outer product, we obtain the lower bound \begin{align}(a;b)&=\sum \limits_{\lambda_i\in \mathrm{Spec}(ab^{T})}\int \limits_{||a||}^{||b||}\log |t-\lambda_i|dt\nonumber \\&\geq \sum \limits_{\lambda_i\in \mathrm{Spec}(ab^T)}(||b||-||a||)\mathrm{min}\{\log |t-\lambda_i|\}_{[||a||,||b||]}\nonumber \\&=||b||-||a||\sum \limits_{\lambda_i\in \mathrm{Spec}(ab^T)}\mathrm{min}\{\log |t-\lambda_i|\}_{[||a||,||b||]}.\nonumber 
\end{align}The result follows by combining this lower bound with the upper bound in Proposition \ref{key}.
\end{proof}
\bigskip

\begin{remark}
Next we establish an inequality for a certain integral of the determinant of matrix induced by vectors in a space. This is a consequence of the upper bound in Proposition \ref{key} and leveraging definition \ref{outer}.
\end{remark}
\bigskip

\begin{theorem}
Let $\langle V,||.||\rangle $ be a finite dimensional normed-vector space over $\mathbb{R}$ such that $||b||>||a||>1$ and $\mathrm{Spec}(ab^T)\subset \mathbb{R}$ with  $\frac{||b||+||a||}{2}>\bigg|\mathrm{max}\bigg(\mathrm{Spec}(ab^T)\bigg)\bigg|$. Then the inequality is valid\begin{align}\int \limits_{||a||}^{||b||}\log|\mathrm{det}(ab^{T}-tI)|dt&\leq \# \mathrm{Spec}(ab^T)(||b||-||a||)\log\bigg(\frac{||b||+||a||}{2}\bigg)\nonumber \\&+\sum \limits_{\lambda_i\in \mathrm{Spec}(ab^T)}\log \bigg(1-\frac{2\lambda_i}{||b||+||a||}\bigg).\nonumber
\end{align}
\end{theorem}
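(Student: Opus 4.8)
The plan is to recognize that the left-hand side is literally the outer product $(a;b)$ of Definition \ref{outer}, after which the desired bound is nothing more than a restatement of Proposition \ref{key}. So the entire theorem reduces to an identity connecting the integral of $\log|\det(ab^T - tI)|$ to the outer product, followed by one appeal to the already-proved Proposition.

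First I would exploit the hypothesis $\mathrm{Spec}(ab^T)\subset\mathbb{R}$ to factor the characteristic polynomial completely over $\mathbb{R}$. Because every eigenvalue is real, the determinant splits as
\begin{align}
\det(ab^T - tI) = \prod_{\lambda_i\in\mathrm{Spec}(ab^T)}(\lambda_i - t),\nonumber
\end{align}
the product running over the spectrum counted with algebraic multiplicity so that its cardinality equals $\dim V$. Taking absolute values and then logarithms converts this product into a sum,
\begin{align}
\log|\det(ab^T - tI)| = \sum_{\lambda_i\in\mathrm{Spec}(ab^T)}\log|t-\lambda_i|,\nonumber
\end{align}
where the absolute value inside each factor absorbs the sign ambiguity of $(\lambda_i - t)$ versus $(t-\lambda_i)$.

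Next I would integrate both sides over $[\,||a||,||b||\,]$. Since the sum is finite, integration commutes with summation, giving
\begin{align}
\int\limits_{||a||}^{||b||}\log|\det(ab^T - tI)|\,dt = \sum_{\lambda_i\in\mathrm{Spec}(ab^T)}\int\limits_{||a||}^{||b||}\log|t-\lambda_i|\,dt = (a;b),\nonumber
\end{align}
which is precisely the outer product by Definition \ref{outer}. Substituting the upper bound of Proposition \ref{key} for $(a;b)$ then yields the stated inequality verbatim.

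The one technical point I would need to dispatch is integrability of the integrand. Whenever an eigenvalue $\lambda_i$ lies inside the interval $[\,||a||,||b||\,]$, the factor $\log|t-\lambda_i|$ develops a logarithmic singularity at $t=\lambda_i$; this singularity is integrable, so each summand, and hence the full integral, is finite and the interchange of sum and integral is legitimate. I would also be careful to count the spectrum with multiplicity, so that the degree of the characteristic polynomial matches the quantity $\#\mathrm{Spec}(ab^T)$ appearing in the statement. Here this bookkeeping is particularly transparent because $ab^T$ is a rank-one matrix: its only nonzero eigenvalue is $b^T a$, with all remaining eigenvalues equal to zero, making the factorization above fully explicit. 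With these points settled, the result follows immediately from Proposition \ref{key}.
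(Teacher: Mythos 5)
Your proposal is correct and takes essentially the same route as the paper, whose entire proof is the one-line remark that the theorem is ``a simple consequence of Proposition \ref{key}''; the factorization $\det(ab^{T}-tI)=\prod_{\lambda_i}(\lambda_i-t)$ identifying the left-hand side with $(a;b)$ is exactly the implicit step being invoked. You in fact supply more care than the paper does, by addressing the integrable logarithmic singularities and the multiplicity bookkeeping for the rank-one matrix $ab^{T}$.
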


\begin{proof}
This is a simple consequence of Proposition \ref{key}.
\end{proof}

\footnote{
\par
.}%
.




\bibliographystyle{amsplain}

\end{document}